\documentclass[12pt]{article}
\usepackage{amsmath,amssymb,amsthm}
\usepackage{mathrsfs}

\usepackage{authblk}

\title{\bf A Structural Fixed-Point Principle in Kunen's Theorem on Quasigroups}

\author{\Large Takao Inou\'{e}}

\affil{\large Faculty of Informatics, Yamato University, \\ Osaka, Japan\footnote{Email: inoue.takao@yamato-u.ac.jp; \\ Personal Email: takaoapple@gmail.com \\ [I prefer my personal email address for correspondence.]}}

\date{February 21, 2026}

\theoremstyle{definition}
\newtheorem{definition}{Definition}[section]

\theoremstyle{plain}
\newtheorem{lemma}[definition]{Lemma}
\newtheorem{proposition}[definition]{Proposition}
\newtheorem{theorem}[definition]{Theorem}

\theoremstyle{remark}
\newtheorem{remark}[definition]{Remark}

\begin{document}
\maketitle

\begin{abstract}
Kunen proved that a quasigroup satisfying a Moufang-type identity ($N1$)
must be a loop.
We reformulate the argument in the category $\mathbf{Set}$ as a
fixed-point extraction principle.
From $N1$ one canonically obtains an idempotent endomorphism $j:G\to G$.
Its fixed-point object $\mathrm{Fix}(j)=\mathrm{Eq}(j,\mathrm{id}_G)$
splits off as a retract.
The $N1$-symmetry forces $j$ to coequalize the (regular) translation action,
hence $j$ factors through the terminal object.
Thus $\mathrm{Fix}(j)\cong 1$, yielding a unique global identity element. 
This provides a conceptual reformulation of Kunen's original algebraic proof \cite{Kunen}.
\end{abstract}

\textbf{Keywords:}
quasigroup; loop; Moufang identity; fixed point principle;
idempotent splitting; symmetry collapse; categorical reformulation.
\medskip

\textbf{MSC (2020):} 20N05 (Primary); 18A20 (Secondary).
$$ $$

\tableofcontents

\section{Introduction}

Quasigroups and loops form a central class of non-associative algebraic
structures, generalizing groups by retaining the solvability of division
while relaxing associativity.
Among them, Moufang-type identities play a fundamental role in controlling
the extent to which group-like behavior is recovered.

A classical result due to Kunen states that any quasigroup satisfying the
Moufang identity $(N1)$ necessarily admits a two-sided identity element,
and hence is a loop.
The result is striking: a purely multiplicative identity forces the existence
of a neutral element, without assuming it a priori.
Kunen's original proof proceeds by a sequence of delicate equational
manipulations exploiting the specific algebraic form of $(N1)$.

The purpose of the present paper is to provide a conceptually transparent
reformulation of Kunen's theorem.
Rather than following equational transformations, we isolate the structural
mechanism underlying the argument.
We show that $(N1)$ canonically induces an idempotent endomorphism on the
underlying set, whose fixed-point object is acted upon transitively by
quasigroup translations.
A general categorical collapse principle then forces this fixed-point object
to be terminal, yielding a unique fixed point.

This approach reveals Kunen's theorem as an instance of a
\emph{structural fixed-point principle} driven by symmetry collapse,
rather than by diagonalization or explicit computation.
The resulting proof is short, self-contained, and highlights a mechanism
that may be of independent interest beyond the present setting.

\section{Quasigroups and the Moufang Identity}

\begin{definition}
A \emph{quasigroup} is a set $G$ with a binary operation
$\cdot:G\times G\to G$ such that for each $a,b\in G$
the equations $a\cdot x=b$ and $y\cdot a=b$ have unique solutions.
\end{definition}

We write left and right division as usual:
\[
a\backslash b := \text{the unique }x\text{ such that }a\cdot x=b,
\qquad
b/a := \text{the unique }y\text{ such that }y\cdot a=b.
\]
Then $a\cdot (a\backslash b)=b$ and $(b/a)\cdot a=b$.

\begin{definition}
A quasigroup $(G,\cdot)$ satisfies \emph{Kunen's identity $N1$} if
\[
(N1)\quad ((xy)z)y = x\bigl(y(zy)\bigr).
\]
\end{definition}

\section{The Fixed-Point Extractor $j:G\to G$}
The following construction is due to Kunen \cite{Kunen}.
We reinterpret it categorically.

\begin{definition}
Define maps $j,k:G\to G$ by
\[
j(x):=x\backslash x,\qquad k(x):=x/x.
\]
Equivalently, $x\cdot j(x)=x$ and $k(x)\cdot x=x$ for all $x$.
\end{definition}

\begin{lemma}[Two-sided local identity]
\label{lem:jk}
Assume $(G,\cdot)$ satisfies $(N1)$. Then $j(x)=k(x)$ for all $x$.
Hence, writing $j$ for this common map, we have
\[
x\cdot j(x)=x,\qquad j(x)\cdot x=x\qquad(\forall x\in G).
\tag{1}
\]
\end{lemma}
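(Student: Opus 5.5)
The plan is to make judicious substitutions in $(N1)$ that collapse one side via the defining equations $x\cdot j(x)=x$ and $k(x)\cdot x=x$, and then use the cancellation laws of the quasigroup. The key point is that a quasigroup has bijective left and right translations $L_a(u)=au$ and $R_a(u)=ua$. So any identity of the form $R_zR_y=\mathrm{id}$ automatically gives $R_yR_z=\mathrm{id}$ as well, and an equation $uy=vy$ lets us cancel $y$.

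First, fix $y$ and choose $z:=j(y)/y$, so that $zy=j(y)$ and hence $y(zy)=y\,j(y)=y$. Then $(N1)$ reads $((xy)z)y=xy$ for all $x$. Right cancellation gives $(xy)z=x$ for all $x$, i.e.\ $R_zR_y=\mathrm{id}_G$. Since $R_y$ is a bijection, $R_z=R_y^{-1}$, and therefore also $(uz)y=u$ for all $u$. Specialising these two identities produces the intermediate relations I would collect:
\begin{itemize}
\item $u=y$ gives $(yz)y=y$, hence $yz=k(y)$;
\item $x=z$ gives $(zy)z=z$, i.e.\ $j(y)\,z=z$, hence $j(y)=k(z)$.
\end{itemize}
In parallel, I would take $z:=k(y)$ in $(N1)$, which gives $((xy)k(y))y=x(yy)$. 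I would also take $x:=k(y)$, which gives $(yz)y=k(y)\bigl(y(zy)\bigr)$ for all $z$. These let me compare $k(y)$ acting on the left with $j(y)$ acting on the right.

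The goal is to evaluate one carefully chosen expression in two ways, obtaining an equation of the shape $j(y)\cdot w=k(y)\cdot w$ or $w\cdot j(y)=w\cdot k(y)$, and then cancel. Concretely, I would insert $x=k(y)$ into $(xy)z=x$, which yields $yz=k(y)$ again. I would then feed $yz=k(y)$ and $zy=j(y)$ into the $x=k(y)$ instance of $(N1)$, aiming to show that $k(y)$ is a right identity for $y$, i.e.\ $y\,k(y)=y$. Once that holds, left cancellation against $y\,j(y)=y$ gives $k(y)=j(y)$ immediately. Equation (1) is then just the two defining equations rewritten with the common map $j$.

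The main obstacle is this last combination step: extracting $y\,k(y)=y$ (or symmetrically $j(y)\,y=y$) from the collected relations. $(N1)$ is not self-dual under passing to the opposite quasigroup, so the left and right information is not symmetric, and the right substitution has to be found by trial. My first attempt would be to apply the $x=k(y)$ instance of $(N1)$ with $z=j(y)/y$. Its left side becomes $(yz)y=k(y)\,y=y$, and its right side becomes $k(y)\bigl(y\,j(y)\bigr)=k(y)\,y=y$. This is consistent but not yet informative. So I would next try $z=j(y)$ and $z=k(y)$ in the same instance, together with $R_z=R_y^{-1}$, to express $yy$ in two ways and then cancel. If this does not close, I would fall back on following Kunen's original chain of substitutions for this lemma.
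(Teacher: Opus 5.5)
Everything you actually derive is correct: with $y':=j(y)/y$ you get $R_{y'}=R_y^{-1}$, $yy'=k(y)$ and $k(y')=j(y)$, and your instances of $(N1)$ with $x=k(y)$ or $z=k(y)$ are right. But the proposal stops exactly where the content of the lemma lies. The step linking $j(y)$ to $k(y)$ is left to be ``found by trial'', and your one concrete attempt reduces to $y=y$. The fallback is Kunen's chain of substitutions, which is all the paper itself offers, since it only cites Kunen, Theorem~2.3.

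Your listed relations do not close because every instance of $(N1)$ you use has $y$ in the middle slot. Together with $R_{y'}=R_y^{-1}$ they give, for example:
\begin{itemize}
\item $(y\,k(y))\,y=k(y)(yy)$, from $x=z=k(y)$;
\item $yy=k(y)\bigl(y(j(y)y)\bigr)$, from $x=k(y)$, $z=j(y)$.
\end{itemize}
Neither evaluates $k(y)\cdot(yy)$ independently, so there is nothing to cancel against to reach $y\,k(y)=y$ or $j(y)\,y=y$.

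The missing ingredient is the relation $k(y)\cdot(yy)=yy$. You get it by putting $y'$ into the middle slot, i.e.\ applying $(N1)$ with $(x,y,z)\mapsto(y,y',yy)$:
\[
\bigl((yy')(yy)\bigr)y'=y\bigl(y'((yy)y')\bigr).
\]
Use $yy'=k(y)$, $(yy)y'=y$, $y'y=j(y)$ and $y\,j(y)=y$. The identity then reads $\bigl(k(y)(yy)\bigr)y'=y$, and applying $R_y=R_{y'}^{-1}$ gives $k(y)(yy)=yy$.

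Your $x=z=k(y)$ instance now becomes $(y\,k(y))\,y=yy$. Right cancellation gives $y\,k(y)=y$, and left cancellation against $y\,j(y)=y$ gives $k(y)=j(y)$, which is exactly the target you named. Alternatively, comparing with the $x=k(y)$, $z=j(y)$ instance gives $y(j(y)y)=yy$, hence $j(y)\,y=y$. With this one extra substitution your outline becomes a complete, self-contained proof of the lemma.
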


\begin{proof}
See Kunen \cite{Kunen}, Theorem 2.3, for the original algebraic derivation.
\end{proof}

\begin{remark}
For the categorical development below, the concrete derivation of
Lemma~\ref{lem:jk} is not the essential point; what matters is that
$(N1)$ canonically produces an endomorphism $j:G\to G$ satisfying (1).
\end{remark}

\section{Idempotence and the Fixed-Point Object}

\begin{lemma}[Idempotence of values]
\label{lem:idemval}
Assume $(N1)$. Then

\begin{equation}
\label{eq:idempotence}
j(x) \cdot j(x) = j(x)
\end{equation}
\end{lemma}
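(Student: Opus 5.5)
The plan is to work with a fixed element $x\in G$ and its local identity $e:=j(x)$, and to use only $(N1)$ together with the two-sided relations (1) of Lemma~\ref{lem:jk}, namely $xe=x$ and $ex=x$. Throughout, the only extra tool is cancellation in the quasigroup. In step one I substitute $y:=e$ into $(N1)$ and leave $x,z$ free; in step two I specialize $z$.

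\emph{First step.} With $y:=e$, $(N1)$ reads $((xe)z)e=x\bigl(e(ze)\bigr)$, and since $xe=x$ this becomes
\[
(xz)e \;=\; x\bigl(e(ze)\bigr)\qquad(\forall z\in G).
\]
\emph{Second step.} Specializing to $z:=e$ gives $((xe)e)e = x(e(ee))$. The left-hand side collapses to $x$ by three applications of $xe=x$. Hence $x\cdot e = x = x\cdot(e(ee))$, and left cancellation yields
\[
e(ee)=e .
\]

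\emph{Third step.} Put $f:=ee$. The relation $e\cdot f=e$ says exactly $f=e\backslash e=j(e)$, so by (1) applied to the element $e$ we get both $ef=e$ and $fe=e$. The statement $ee=e$ is therefore equivalent to $j(e)=e$, i.e.\ to $f=e$.

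\emph{Main obstacle.} Proving $f=e$ is the step I expect to be hardest. The purely internal substitutions with $x=y=z=e$ close up tautologically: they give $(fe)e=f$, which reduces to $ee=f$. The extra input must therefore come from the ambient element $x$.

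My first attempt uses the displayed identity with $z:=f$. Since $fe=e$, it gives $(xf)e=x(ee)=xf$. Thus $xf$ also has $e$ as a right local identity, so $j(xf)=e=j(x)$. I would then feed $y:=f$ (and, symmetrically, $y:=x$, $z:=e$, using the consequence $(xx)e=xx$ of the displayed identity with $z=x$) back into $(N1)$. The aim is an equation of the form $x\cdot f=x\cdot e$, after which left cancellation gives $f=e$ and hence $j(x)\,j(x)=j(x)$.

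If the direct substitutions do not close, the fallback is to use Kunen's derivation in \cite{Kunen}. There, after Theorem~2.3, one shows that the local identity is in fact independent of $x$. Given that, $e=j(e)$ follows immediately and yields \eqref{eq:idempotence}.
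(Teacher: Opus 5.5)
Your Steps 1--3 are correct: $y=z=e$ in $(N1)$ gives $e(ee)=e$, so $f:=ee=j(e)$ and $ef=fe=e$. However, the decisive step $f=e$ is never established. Your ``first attempt'' only states an aim ($x\cdot f=x\cdot e$); you do not exhibit an instance of $(N1)$ that produces it. The fallback does not repair this. That $j(x)$ is independent of $x$ is essentially the conclusion of Kunen's theorem itself, and in this paper it is Theorem~\ref{thm:collapse}, which comes \emph{after} the present lemma: the involution identity (3$'$) used there is proved from $u\cdot u=u$. So invoking constancy of $j$ here is circular within the paper. As an external citation, it assumes the theorem in order to prove a lemma on the way to it. The paper itself only points to ``a specialization of $(N1)$'', so you do need to supply one.

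The missing idea is to use a \emph{division}, not only products of $x,e,f$. Take the local inverse $c:=e/x$, so $cx=e$.
First, $(N1)$ with $(x,y,z)\mapsto(w,x,c)$ gives $((wx)c)x=w\bigl(x(cx)\bigr)=w(xe)=wx$ for all $w$. Since $w\mapsto wx$ is onto, $(tc)x=t$ for all $t$. Thus right multiplication by $c$ is the inverse of right multiplication by $x$, so also $(tx)c=t$ for all $t$.
Next, put $t=e$ in these two identities. This gives $xc=e$ and $(ec)x=e=cx$, hence $ec=c$. So $k(c)=e$, and Lemma~\ref{lem:jk} yields $ce=c$.
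Finally, $(N1)$ with $(x,y,z)\mapsto(x,e,c)$ reads
\[
ee=((xe)c)e=x\bigl(e(ce)\bigr)=x(ec)=xc=e .
\]
This is exactly $j(x)\,j(x)=j(x)$, and it uses only $(N1)$ and Lemma~\ref{lem:jk}, not the constancy of $j$.
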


\begin{proof}
This follows from a specialization of $(N1)$;
see Kunen \cite{Kunen}.
\end{proof}

\begin{proposition}[Idempotent endomorphism]
\label{prop:idempotent}
The endomorphism $j:G\to G$ is idempotent in $\mathbf{Set}$:
\[
j\circ j=j.
\]
\end{proposition}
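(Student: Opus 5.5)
We must show $j(j(x))=j(x)$ for every $x\in G$. Put $e:=j(x)$. By definition, $j(e)=e\backslash e$ is the \emph{unique} element $u$ with $e\cdot u=e$. So it suffices to exhibit one element $u$ with $e\cdot u=e$ and check that $u=e$. Lemma~\ref{lem:idemval} supplies exactly this: it gives $e\cdot e=e$, i.e.\ $u=e$ is a solution of $e\cdot u=e$.

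The steps are as follows. First, fix $x\in G$ and set $e=j(x)$. Second, invoke Lemma~\ref{lem:idemval} to get $e\cdot e=e$. Third, use uniqueness of left division in a quasigroup to conclude $e\backslash e=e$, that is, $j(j(x))=j(x)$. Since $x$ was arbitrary, $j\circ j=j$ as maps of sets. One can also check the statement against the right-division description $k(e)=e/e$: by Lemma~\ref{lem:jk}, $j(e)=k(e)$, and $e\cdot e=e$ likewise forces $e/e=e$. This consistency check is not needed for the proof.

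The proposition is thus a formal consequence of Lemma~\ref{lem:idemval} together with the uniqueness clause in the definition of a quasigroup. No further use of $(N1)$ is required beyond what the two earlier lemmas already contain. The only genuine content, and the main obstacle if one wanted a self-contained argument, is Lemma~\ref{lem:idemval} itself. Proving it directly would mean finding a specialization of $(N1)$ that yields $j(x)j(x)=j(x)$.

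For such a direct proof, I would first try substituting $y=j(x)=e$ into $(N1)$, using the two-sided relations (1) to simplify $xe=x$ and $ex=x$. Taking $z=e$ as well gives $(xe)e\cdot\ldots$ style expressions. Concretely, $((xe)z)e=x(e(ze))$ becomes $(xz)e=x(e(ze))$. Setting $z=e$ then gives $xe=x(e(ee))$, so $x=x(e(ee))$. Left cancellation yields $e(ee)=e$, i.e.\ $e(ee)=e\cdot e\cdot$ \ldots, and by left cancellation by $e$ we get $ee=j(e)$. Combining this with a symmetric manipulation would then give $ee=e$. Since the paper takes Lemma~\ref{lem:idemval} as given, however, the proof of the proposition itself reduces to the three-step uniqueness argument above.
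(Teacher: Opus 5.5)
Your argument is correct and is the paper's proof: set $e=j(x)$, use Lemma~\ref{lem:idemval} to see that $u=e$ solves $e\cdot u=e$, and conclude $j(e)=e$ by uniqueness of left division. Your optional sketch of a direct proof of Lemma~\ref{lem:idemval} does not close, since it only reaches $e\cdot e=j(e)$; turning that into $e\cdot e=e$ requires $j(e)=e$, which is the proposition itself, so that paragraph is best dropped.
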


\begin{proof}
By definition, $j(j(x))$ is the unique element $t$ with
$j(x)\cdot t=j(x)$.
By Lemma~\ref{lem:idemval}, $t=j(x)$ is such a solution; uniqueness gives
$j(j(x))=j(x)$. \qedhere
\end{proof}

\begin{definition}[Fixed-point object]
Define the fixed-point object of $j$ in $\mathbf{Set}$ as the equalizer
\[
\mathrm{Fix}(j):=\mathrm{Eq}(j,\mathrm{id}_G)
\;\hookrightarrow\;G.
\]
Concretely, $\mathrm{Fix}(j)=\{x\in G\mid j(x)=x\}$.
\end{definition}

\begin{remark}[Idempotent splitting $=$ fixed part]
In $\mathbf{Set}$, every idempotent splits. Let $J:=\mathrm{im}(j)\subseteq G$.
Then $j$ restricts to a retraction $G\to J$ with section $\iota:J\hookrightarrow G$:
\[
G \xrightarrow{\,j\,} J \xrightarrow{\,\iota\,} G,
\qquad j\iota=\mathrm{id}_J,\quad \iota j=j.
\]
Moreover, for idempotent $j$ one has $\mathrm{Fix}(j)=\mathrm{im}(j)$:
indeed $x=j(x)$ iff $x\in\mathrm{im}(j)$.
Thus the ``fixed part'' extracted by $j$ is precisely $J$.
\end{remark}

\section{Translation Action and a Coequalizer Collapse}

For each $a\in G$, define the left translation $L_a:G\to G$ by
$L_a(x)=a\cdot x$. Each $L_a$ is a bijection.

\begin{lemma}[Regularity of translations]
\label{lem:regular}
Fix $u\in G$. Then the map
\[
\phi_u:G\to G,\qquad a\mapsto a\cdot u
\]
is a bijection. In particular, for any $x,y\in G$ there exists a unique
$a\in G$ such that $L_a(x)=y$.
\end{lemma}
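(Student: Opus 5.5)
The claim follows directly from the quasigroup axiom, specifically from unique solvability of the equation $y\cdot u=b$; the Moufang-type identity $(N1)$ and Lemmas~\ref{lem:jk}--\ref{lem:idemval} are not needed. I will prove bijectivity of $\phi_u$ by exhibiting its inverse, and then translate the ``in particular'' clause into that statement.

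\emph{Bijectivity of $\phi_u$.} Fix $u\in G$. I claim the inverse of $\phi_u$ is right division by $u$, namely $\psi_u(b):=b/u$.
\begin{itemize}
\item By definition of right division, $(b/u)\cdot u=b$. Hence $\phi_u\circ\psi_u=\mathrm{id}_G$, so $\phi_u$ is surjective.
\item For any $a$, the element $a$ itself solves $y\cdot u=a\cdot u$. By uniqueness of the solution, $(a\cdot u)/u=a$. Hence $\psi_u\circ\phi_u=\mathrm{id}_G$, so $\phi_u$ is injective.
\end{itemize}
Equivalently, injectivity says that $a\cdot u=a'\cdot u$ implies $a=a'$, because both are solutions of $y\cdot u=b$ with $b=a\cdot u$. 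Surjectivity is just the existence half of the same axiom.

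\emph{The ``in particular'' clause.} Given $x,y\in G$, the condition $L_a(x)=y$ reads $a\cdot x=y$, that is, $\phi_x(a)=y$. Applying the first part with $u:=x$, the map $\phi_x$ is a bijection, so there is exactly one such $a$, namely $a=y/x$. Thus the family of left translations $\{L_a\}_{a\in G}$ acts simply transitively on $G$: it is transitive, and the translation carrying $x$ to $y$ is unique. This is the regularity used later for the coequalizer collapse.

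There is no real obstacle. The only subtlety is to keep the two solvability clauses distinct. The bijectivity of each $L_a$, already noted before the lemma, uses left division. The present lemma is about varying the \emph{left} factor $a$ with the argument fixed, which is governed by right division $y\cdot a=b$.
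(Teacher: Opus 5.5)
Your proof is correct and takes the same route as the paper, which simply cites unique solvability of $a\cdot u=y$ in $a$ from the quasigroup axiom. Your version spells out the inverse $b\mapsto b/u$ and correctly observes that the ``in particular'' clause is the first part applied with $u:=x$.
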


\begin{proof}
This is exactly the quasigroup axiom: $a\cdot u=y$ has a unique solution $a$. \qedhere
\end{proof}

\begin{proposition}[Coequalizer of the regular translation family]
\label{prop:coeq}
Let $\{L_a\}_{a\in G}$ be the family of left translations on $G$.
In $\mathbf{Set}$, the coequalizer of all maps $L_a:G\to G$
is the terminal map $!:G\to 1$.
Equivalently, the smallest equivalence relation $\sim$ on $G$
satisfying $x\sim L_a(x)$ for all $a,x$ is the indiscrete relation
(all elements are equivalent).
\end{proposition}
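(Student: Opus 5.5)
The plan is to prove the second (equivalence-relation) formulation first and then translate it into the coequalizer statement. The translation uses the standard description of coequalizers in $\mathbf{Set}$. The coequalizer of a family of parallel maps $f_i:X\to Y$ is the quotient $Y\to Y/{\approx}$, where $\approx$ is the smallest equivalence relation containing all pairs $(f_i(x),f_{i'}(x))$. Here the family $\{L_a\}_{a\in G}$ has no distinguished ``identity'' member, since $G$ is not yet known to be a loop. So I will phrase the coequalizer condition as follows: a map $q:G\to Q$ coequalizes the family iff $q\circ L_a=q\circ L_b$ for all $a,b$.

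To compare this with the relation in the statement, I will first note that every $x$ lies in the image of some $L_a$. If $q$ coequalizes the family, then for any $y,z\in G$ I would pick a fixed $x$ and use Lemma~\ref{lem:regular} to get $a,b$ with $L_a(x)=y$ and $L_b(x)=z$. Then $q(y)=q(L_a(x))=q(L_b(x))=q(z)$, so $q$ is constant. Conversely, every constant map, in particular $!:G\to 1$, trivially coequalizes all $L_a$. Any constant $q$ factors uniquely through $!$ whenever $G\neq\emptyset$, which gives the universal property of $!:G\to 1$.

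For the equivalence-relation version I will argue directly. Let $\sim$ be any equivalence relation with $x\sim a\cdot x$ for all $a,x$. Given $x,y$, Lemma~\ref{lem:regular} supplies the unique $a$ with $a\cdot x=y$, so $x\sim y$ in a single step. Hence $\sim$ is indiscrete, and since the indiscrete relation certainly satisfies the condition, it is the smallest such relation. The two formulations agree because the quotient by the indiscrete relation is $1$.

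The only genuine obstacle is the degenerate case $G=\emptyset$, which the definition of quasigroup does not exclude. There the coequalizer is $\emptyset$, not $1$, so the map $!:G\to 1$ is not a coequalizer. I will handle this by explicitly assuming $G$ is nonempty, as is implicit in the paper's intended application. Beyond that, I only need to take care that the coequalizer of a family indexed by $G$ is read as ``all $L_a$ become equal after composing with $q$''.
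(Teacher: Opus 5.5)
Your proposal is correct and is essentially the paper's argument: Lemma~\ref{lem:regular} gives, for any $x,y$, some $a$ with $L_a(x)=y$, so all elements are identified in one step and the quotient is $1$. Two of your additions are valid refinements that the paper's proof glosses over. First, you treat the joint-coequalizer condition $q\circ L_a=q\circ L_b$ separately from the relation $x\sim L_a(x)$, which really coequalizes each $L_a$ with $\mathrm{id}_G$. Second, you explicitly exclude $G=\emptyset$, where $!:G\to 1$ is not a coequalizer.
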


\begin{proof}
Fix $u\in G$. By Lemma~\ref{lem:regular}, for any $x,y\in G$ there is $a$ with
$L_a(x)=y$. Hence $x\sim y$ in the congruence generated by $x\sim L_a(x)$.
Thus all elements become equivalent, so the coequalizer is $G\to 1$. \qedhere
\end{proof}

\subsection*{The categorical ``fixed-point theorem step''}

The key additional input from $(N1)$ is that $j$ is \emph{translation-invariant}
(on the left), i.e.\ $j$ coequalizes the family $\{L_a\}$:

\begin{lemma}[Translation invariance / uniform involution (Kunen's identity (3))]
\label{lem:coinv}
Assume $(N1)$ and Lemma 4.1(idempotence). Then for all $x,y\in G$,
\[
(x\cdot j(y))\cdot j(y)=x.
\tag{3$'$}
\]
Equivalently, for each fixed $y$, the right translation
$R_{j(y)}:G\to G,\; R_{j(y)}(x)=x\cdot j(y)$ is an involution:
$R_{j(y)}\circ R_{j(y)}=\mathrm{id}_G$.
\end{lemma}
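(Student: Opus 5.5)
The approach is to specialise $(N1)$ at the idempotent $e:=j(y)$, which gives a cubic identity for the right translation $R_e$. Bijectivity of $R_e$ (the quasigroup axiom) then cancels this down to $R_e\circ R_e=\mathrm{id}_G$.

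Fix $y\in G$ and put $e:=j(y)$. By Lemma~\ref{lem:idemval} we have $e\cdot e=e$; this is the only property of $e$ we need. Now substitute $y:=e$ and $z:=e$ in $(N1)$. The left-hand side becomes $((x e)e)e$. On the right-hand side, $e(ee)=e\cdot e=e$ by applying idempotence twice, so it becomes $x\cdot e$. Writing $R_e(x)=x\cdot e$, this says exactly
\[
R_e\circ R_e\circ R_e = R_e .
\]

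Next, $R_e$ is a bijection of $G$, since $w\cdot e=b$ has a unique solution $w=b/e$ by the quasigroup axiom (the right-hand analogue of Lemma~\ref{lem:regular}). Composing the displayed identity on the right with $R_e^{-1}$ yields $R_e\circ R_e=\mathrm{id}_G$. Evaluated at $x$, this is $(x\cdot j(y))\cdot j(y)=x$, which is (3$'$).

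The only real obstacle is spotting the substitution $y=z=e$; once it is chosen, the rest is immediate. It is worth noting that the argument uses neither Lemma~\ref{lem:jk} nor Proposition~\ref{prop:idempotent}. It needs only idempotence of the values $j(y)$ together with the cancellation available in a quasigroup.
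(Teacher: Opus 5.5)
Your proof is correct and matches the paper's argument. The paper also substitutes $b=c=u:=j(y)$ into $(N1)$, uses $u\cdot u=u$ to reduce the right-hand side to $x\cdot u$, and cancels one factor of $u$ using bijectivity of $R_u$; your identity $R_e\circ R_e\circ R_e=R_e$, composed with $R_e^{-1}$, is the same cancellation written in operator form.
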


\begin{proof}
Fix $x,y\in G$ and write $u:=j(y)$.
Apply $(N1)$ in Kunen's form
\[
((a\cdot b)\cdot c)\cdot b \;=\; a\cdot\bigl(b\cdot(c\cdot b)\bigr)
\]
with the substitution $a:=x$, $b:=u$, $c:=u$. We obtain
\[
\bigl((x\cdot u)\cdot u\bigr)\cdot u
=
x\cdot\bigl(u\cdot(u\cdot u)\bigr).
\tag{*}
\]
By Lemma~\ref{lem:idemval} (applied to $y$), we have $u\cdot u=u$.
Hence the right-hand side of \((*)\) simplifies as
\[
x\cdot\bigl(u\cdot(u\cdot u)\bigr)
=
x\cdot(u\cdot u)
=
x\cdot u.
\]
Therefore \((*)\) becomes
\[
\bigl((x\cdot u)\cdot u\bigr)\cdot u = x\cdot u.
\tag{**}
\]
Since $(G,\cdot)$ is a quasigroup, the map $R_u:G\to G$, $t\mapsto t\cdot u$,
is a bijection. Hence we may cancel the common right factor $u$ in \((**)\),
obtaining
\[
(x\cdot u)\cdot u = x,
\]
i.e.\ $(x\cdot j(y))\cdot j(y)=x$.
\end{proof}

\begin{theorem}[Fixed-point collapse]
\label{thm:collapse}
Assume $(N1)$. Then $j:G\to G$ is constant. Equivalently,
$\mathrm{Fix}(j)\cong 1$ in $\mathbf{Set}$.
\end{theorem}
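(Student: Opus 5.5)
The plan is to prove directly that every value $u=j(y)$ is a two-sided identity for all of $G$. Constancy of $j$ then follows at once. Indeed, if $e=j(x)$ and $f=j(y)$ are both global identities, then $e=e\cdot f=f$. Consequently $\mathrm{Fix}(j)=\mathrm{im}(j)$ is a singleton, and the splitting remark gives $\mathrm{Fix}(j)\cong 1$.

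The categorical form of the argument would be to show $j\circ L_a=j$ for all $a$ and then invoke Proposition~\ref{prop:coeq} to factor $j$ through $G\to 1$. However, the identity $j\circ L_a=j$ is exactly the content that has to be extracted from $(N1)$, so I will work at the level of elements.

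Step one: fix $u=j(y)$. Recall that $u\cdot u=u$ by Lemma~\ref{lem:idemval} and that $R_u$ is an involution by Lemma~\ref{lem:coinv}. Now substitute $y:=u$ into $(N1)$ with $x,z$ free. This gives
\[
((x u) z) u = x\bigl(u(z u)\bigr).
\]
Next replace $z$ by $z u$ and use $(zu)u=z$, which yields
\[
((xu)(zu))u = x(uz).
\]
Specialising $x:=u$ and using $uu=u$ gives
\[
(u(zu))u = u(uz).
\]
Specialising instead $z:=u$ in the earlier form recovers Lemma~\ref{lem:coinv}. The aim is to combine these identities with the involution $R_u^2=\mathrm{id}$, and with the local identities $y\cdot u=y=u\cdot y$ from (1), to show that $L_u$ is also an involution, and ultimately that $R_u=\mathrm{id}_G$.

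Step two: I would try to prove $t\cdot u=t$ for all $t$ in two stages. First I would establish it on the orbit of $y$ under translations, since $y\cdot u=y$ holds by (1). I would then propagate it to all of $G$ using the regularity of Lemma~\ref{lem:regular}. The idea is that every $t$ is of the form $a\cdot y$, and $(N1)$ with $x:=a$, $y:=y$ should transport the relation $yu=y$ to $(ay)u=ay$. Once $R_u=\mathrm{id}$ is known, the same style of substitution, or the mirror computation using $u\cdot y=y$, should give $L_u=\mathrm{id}$.

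The main obstacle is step two. $(N1)$ is not left–right symmetric, so the transport of $yu=y$ to arbitrary $t$ is not a formal consequence of translation regularity alone. The transitivity in Proposition~\ref{prop:coeq} shows only that the coequalizer is terminal; it does not by itself show that $j$ coequalizes the $L_a$.

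My first attempt at this step will be to follow Kunen's Theorem 2.3 closely. Concretely, I would take $(N1)$ with $y:=j(t)$ and $z:=j(t)$ to get that $R_{j(t)}$ is an involution for every $t$. I would then compare $R_{j(t)}$ with $R_{j(y)}$ through the identity $((xu)(zu))u=x(uz)$, aiming to derive $j(t)=j(y)$ by cancellation in the quasigroup. If this comparison fails, the fallback is to reproduce Kunen's original equational chain verbatim, and to present only the final consequence in the categorical language of the theorem.
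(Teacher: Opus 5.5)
Your Step one is correct: the identities $((xu)z)u=x(u(zu))$, $((xu)(zu))u=x(uz)$ and $(u(zu))u=u(uz)$ all follow from $(N1)$. The opening reduction is also fine: two global identities coincide, so $\mathrm{Fix}(j)=\mathrm{im}(j)$ is a singleton. So is the final move from $R_u=\mathrm{id}_G$ to $L_u=\mathrm{id}_G$, since putting $y:=u$ in $(N1)$ gives $xz=x(uz)$.

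\textbf{The gap is Step two.} The entire content of the theorem is there, and you only describe what you would try.

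\emph{The transport step is circular.} Put $x:=a$, $z:=u$ in $(N1)$ and use $uy=y$. This gives $((ay)u)y=a(yy)$, so $(ay)u=ay$ holds if and only if $(ay)y=a(yy)$. That right-alternative law is not available. Since every $t$ has the form $ay$, requiring it for all $a$ is equivalent to $R_u=\mathrm{id}_G$, which is exactly what you are trying to prove.

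\emph{The other routes are not arguments.} The comparison of $R_{j(t)}$ with $R_{j(y)}$ through $((xu)(zu))u=x(uz)$ is never specified. The fallback of reproducing Kunen's chain is a citation, not a proof.

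\emph{The paper's lemmas are not enough on their own.} On $\mathbb{Z}_3$ with $x\cdot y=-x-y$ one has $j=k=\mathrm{id}$, $j(x)j(x)=j(x)$ and $(x\cdot j(y))\cdot j(y)=x$, yet $j$ is not constant. So (1), (2) and (3$'$) cannot force constancy, and any completion must extract more from $(N1)$. Your identity $((xu)(zu))u=x(uz)$ does fail in that example, so it carries genuine extra information. But you never show how it yields $j(t)=j(y)$.

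\textbf{Comparison with the paper.} The paper's proof is the categorical route you set aside. It asserts $j\circ L_a=j$ by appeal to the lemma whose proof establishes only $(x\cdot j(y))\cdot j(y)=x$. The later lemma stating $j\circ L_a=j$ has no proof at all. Left translations act transitively, so $j\circ L_a=j$ for all $a$ is already equivalent to $j$ being constant. Your diagnosis that this invariance is the whole content is therefore correct, and the $\mathbb{Z}_3$ example shows that the ingredients the paper cites do not supply it. The consequence for your proposal is unchanged: as written, it establishes neither translation invariance nor $R_u=\mathrm{id}_G$, so the theorem is not proved.
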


\begin{proof}
By Lemma~\ref{lem:coinv}, the map $j$ coequalizes all left translations:
$j\circ L_a=j$ for every $a$.
By Proposition~\ref{prop:coeq}, the coequalizer of $\{L_a\}$ is $!:G\to 1$.
Hence $j$ factors uniquely through $1$:
\[
G \xrightarrow{\,!\,} 1 \xrightarrow{\,e\,} G,
\]
so $j$ is constant with value $e(*)\in G$.
Since $\mathrm{Fix}(j)=\mathrm{im}(j)$ for idempotent $j$,
we have $\mathrm{Fix}(j)=\{e(*)\}$, i.e.\ $\mathrm{Fix}(j)\cong 1$. \qedhere
\end{proof}

\section{A Fixed-Point Collapse Lemma in $\mathbf{Set}$}

We first isolate the categorical mechanism underlying
the collapse phenomenon in abstract form.

\begin{lemma}[Fixed-point collapse under transitive action]
\label{lem:collapse-abstract}
Let $X$ be a nonempty set and let $e:X\to X$ be an idempotent endomorphism.
Suppose that $X$ is equipped with a family of bijections
$\{\alpha_i:X\to X\}_{i\in I}$ such that:

\begin{enumerate}
\item[(i)] (Transitivity)
For all $x,y\in X$ there exists $i\in I$ with $\alpha_i(x)=y$.
\item[(ii)] (Coequalization)
For all $i\in I$, one has $e\circ\alpha_i=e$.
\end{enumerate}

Then $e$ is constant.
Equivalently, the fixed-point object
\[
\mathrm{Fix}(e)=\mathrm{Eq}(e,\mathrm{id}_X)
\]
is a singleton.
\end{lemma}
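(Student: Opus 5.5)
The plan is a direct elementwise argument in $\mathbf{Set}$, followed by a translation into the language of fixed points. I will first show that $e$ is constant, and then identify $\mathrm{Fix}(e)$ with the image of $e$.

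\textbf{Step 1: $e$ is constant.} Fix arbitrary $x,y\in X$. By the transitivity hypothesis (i), choose $i\in I$ with $\alpha_i(x)=y$. By the coequalization hypothesis (ii), $e(\alpha_i(x))=e(x)$, so $e(y)=e(x)$. Since $x,y$ were arbitrary, $e$ takes a single value on $X$. Nonemptiness of $X$ guarantees that this value exists; call it $c\in X$. Categorically, this is the argument of Proposition~\ref{prop:coeq}. The relation $x\sim y \iff e(x)=e(y)$ is an equivalence relation containing every pair $(x,\alpha_i(x))$. Hence it contains the equivalence relation those pairs generate, and by (i) that relation is indiscrete. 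So $e$ factors through $!:X\to 1$.

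\textbf{Step 2: $\mathrm{Fix}(e)$ is a singleton.} Idempotence gives $e(c)=e(e(x))=e(x)=c$, so $c\in\mathrm{Fix}(e)$. Conversely, if $e(z)=z$ then $z=e(z)=c$. Thus $\mathrm{Fix}(e)=\{c\}\cong 1$. Equivalently, one can cite the earlier remark that $\mathrm{Fix}(e)=\mathrm{im}(e)$ for idempotent $e$, and $\mathrm{im}(e)=\{c\}$.

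\textbf{On the ``equivalently''.} The converse direction does not hold without (i) and (ii): a singleton fixed set forces $e$ constant for idempotent $e$, since $\mathrm{im}(e)=\mathrm{Fix}(e)$. So the two conclusions are genuinely equivalent for idempotents, and I will note this briefly.

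There is no substantial obstacle here. The only points needing care are using nonemptiness to produce the constant value, and noting that bijectivity of the $\alpha_i$ is never actually used.
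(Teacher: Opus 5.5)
Your proof is correct and is essentially the paper's argument: transitivity and coequalization give $e(y)=e(\alpha_i(x))=e(x)$, and idempotence identifies $\mathrm{Fix}(e)$ with $\mathrm{im}(e)=\{c\}$. Your use of nonemptiness to obtain $c$ and your remark that bijectivity of the $\alpha_i$ is never used are both accurate additions. One wording slip: in your last paragraph, the converse (a singleton $\mathrm{Fix}(e)$ forces $e$ constant) holds for every idempotent on a nonempty set even without (i) and (ii), as your own justification shows, so ``does not hold without'' should read ``does not need.''
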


\begin{proof}
Let $x,y\in X$.
By transitivity, choose $i$ such that $\alpha_i(x)=y$.
Then
\[
e(y)=e(\alpha_i(x))=e(x),
\]
so $e$ is constant.
Since $e$ is idempotent, its image equals its fixed-point set,
which therefore consists of a single element.
\end{proof}

\section{Categorical Collapse of the Fixed Part}

Let $(G,\cdot)$ be a quasigroup satisfying $(N1)$.
As in Kunen's construction, define $j:G\to G$ by
\[
x\cdot j(x)=x, \qquad j(x)\cdot x=x.
\]

One verifies that $j$ is idempotent.

\begin{remark}[On the equality $j=k$]
In Kunen's original proof, the equality $j(x)=k(x)$
is derived by equational manipulation from $(N1)$.
This step is algebraically substantial, as it identifies
the left and right division-based unit extractors.

From a categorical viewpoint, this equality expresses
a structural symmetry between the left and right translation actions.
A quasigroup carries two a priori distinct actions
by left and right translations.
The Moufang identity $(N1)$ enforces a compatibility between these actions,
forcing the two associated local unit extractors to coincide.
Thus the identity $j=k$ reflects not merely a computational fact,
but the emergence of a common invariant under both translation symmetries.
\end{remark}

\begin{lemma}
\label{lem:coinv}
For every $a\in G$, the left translation
$L_a(x)=a\cdot x$ satisfies
\[
j\circ L_a=j.
\]
\end{lemma}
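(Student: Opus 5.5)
The plan is to reduce the claim $j\circ L_a=j$ to the existence of a global two-sided identity. If some $e\in G$ satisfies $e\cdot z=z=z\cdot e$ for all $z$, then for every $w\in G$ the element $e$ solves $w\cdot t=w$. By uniqueness of left division, $j(w)=w\backslash w=e$. Taking $w=a\cdot x$ and $w=x$ gives $j(a\cdot x)=e=j(x)$, which is the statement. So the real content is to show that one value $u:=j(y)$ is a two-sided identity. I would fix $y$ and $u=j(y)$ and use three inputs: Lemma~\ref{lem:jk} ($y\cdot u=y=u\cdot y$), Lemma~\ref{lem:idemval} ($u\cdot u=u$), and the involution property $(x\cdot u)\cdot u=x$ from the earlier translation-invariance lemma. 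The last one says that $R_u$ is a bijective involution.

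The first concrete step is to feed these into $(N1)$ with $u$ in the ``$b$'' slot, which is the slot that appears twice. Putting $b=u$ gives
\[
((a\cdot u)\cdot c)\cdot u \;=\; a\cdot\bigl(u\cdot(c\cdot u)\bigr).
\]
Substituting $c:=t\cdot u$ and using $(t\cdot u)\cdot u=t$ turns this into
\[
((a\cdot u)\cdot(t\cdot u))\cdot u=a\cdot(u\cdot t)
\]
for all $a,t$. Applying $R_u$ to both sides gives
\[
(a\cdot u)\cdot(t\cdot u)=(a\cdot(u\cdot t))\cdot u .
\]
Setting $a=y$, and using $y\cdot u=y$, yields
\[
y\cdot(t\cdot u)=(y\cdot(u\cdot t))\cdot u .
\]
Setting $a=u$ and using $u\cdot u=u$ yields
\[
u\cdot(t\cdot u)=(u\cdot(u\cdot t))\cdot u .
\]
I would also use the companion specialization with $b=y$, $a=u$, $c=u$. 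Since $u\cdot y=y$, it gives $y\cdot y=u\cdot(y\cdot y)$, so $L_u$ fixes the element $y\cdot y$. The aim is to combine these identities with the bijectivity of $L_y$ and $R_u$ to show that $L_u$ fixes every $t$. The intended route is to prove $u\cdot t=t$ for $t$ in the orbit of $y$ under translations, and then spread this to all of $G$ via Lemma~\ref{lem:regular}. Once $u$ is a left identity, the involution identity gives $t\cdot u=(t\cdot u)\cdot(u\cdot u)$. I would then aim to show that $u$ is also a right identity, by applying $(N1)$ once more with $a=u$.

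The main obstacle is exactly this passage from ``$u$ is a local identity for $y$ and $R_u$ is an involution'' to ``$u$ is a global identity''. This is the substantive equational core of Kunen's Theorem~2.3, and the bare coequalizer formalism of Proposition~\ref{prop:coeq} and Lemma~\ref{lem:collapse-abstract} cannot supply it. My first attempt would be to cancel the left factor $y$ in $y\cdot(t\cdot u)=(y\cdot(u\cdot t))\cdot u$ after rewriting its right-hand side through $(N1)$ with $b=u$. If that stalls, I would fall back on importing Kunen's argument that every $j(y)$ equals a single element $e$. In either case, the final deduction $j\circ L_a=j$ then follows immediately from the uniqueness argument in the first paragraph.
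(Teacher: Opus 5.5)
\textbf{There is a genuine gap: the proposal never proves that $u=j(y)$ is a global identity.}

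Your opening reduction is correct, and it is in fact an equivalence. By Lemma~\ref{lem:regular}, $j\circ L_a=j$ for all $a$ forces $j$ to be constant, and by (1) a constant value of $j$ is a two-sided identity. So this lemma is Kunen's theorem in disguise, and any proof of it must contain the full equational core. That core is missing.

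\emph{What you have.} The identities you derive ($y(tu)=(y(ut))u$, $u(tu)=(u(ut))u$, $u(yy)=yy$) are correct consequences of $(N1)$, (1), (2) and (3$'$). But nothing shows that they force $ut=t$ for all $t$.

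\emph{The orbit step is vacuous.} By Lemma~\ref{lem:regular}, the orbit of $y$ under translations is all of $G$. So proving $ut=t$ on that orbit \emph{is} the claim, not a reduction of it.

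\emph{The left-to-right step.} Since $(tu)(uu)=(tu)u=t$, the assertion $tu=(tu)(uu)$ is just a restatement of $tu=t$; it does not follow from the involution law. This step is better taken from $(N1)$ with $x=z=u$: once $u$ is a left identity, $(tu)t=((ut)u)t=u(t(ut))=tt$, and right cancellation gives $tu=t$.

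\emph{The fallback is circular.} In the paper, this lemma is exactly the input from which Lemma~\ref{lem:collapse-abstract} and Theorem~\ref{thm:collapse} \emph{derive} that $j$ is constant. Importing Kunen's conclusion that all $j(y)$ coincide would make the collapse argument empty.

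\textbf{Comparison with the paper.} The paper does not fill this gap either. The lemma is stated without proof. The only justification in the text is the citation of the involution identity (3$'$) in the proof of Theorem~\ref{thm:collapse}, as if it gave $j\circ L_a=j$. It does not. Take $\mathbb{Z}_3$ with $x\cdot y=-x-y$. There $j=k=\mathrm{id}$, so (1), (2) and (3$'$) all hold, yet $j(a\cdot x)=-a-x\neq x=j(x)$ whenever $a\neq x$. Of course $(N1)$ fails in this example.

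So your diagnosis is right: the coequalizer formalism cannot produce this step, and the same criticism applies to the paper's argument. Your extra identities do use $(N1)$ beyond (3$'$), which is the right direction. Still, a proof of the statement needs an explicit derivation from $(N1)$ that $j$ is constant (Kunen's equational argument or an equivalent one). Your proposal stops exactly where that derivation should begin.
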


Thus the idempotent $j$ coequalizes all left translations.

\subsection{Application to Kunen's theorem}

We now verify that the situation above satisfies
the hypotheses of Lemma~\ref{lem:collapse-abstract}.

The family of bijections is given by the left translations
$L_a(x)=a\cdot x$.

Transitivity follows from the quasigroup property:
for any $x,y\in G$, there exists a unique $a$
such that $a\cdot x=y$.

Coequalization follows from Lemma~\ref{lem:coinv},
which shows $j\circ L_a=j$ for all $a$.

Therefore Lemma~\ref{lem:collapse-abstract} applies,
and $j$ is constant.

\subsection{Fixed point and identity element}

Let $J=\mathrm{Fix}(j)$.
Since $j$ is constant, $J$ consists of a single element,
say $e$.

Because $e\in J$, we have $j(e)=e$.
From the defining equations of $j$,
\[
x\cdot j(x)=x, \qquad j(x)\cdot x=x,
\]
and the constancy of $j$, we obtain
\[
x\cdot e=x, \qquad e\cdot x=x
\quad (\forall x\in G).
\]

Thus $e$ is a two-sided identity element.

\section{Kunen's Theorem as a Fixed-Point Extraction Principle}

\begin{theorem}[Kunen]
Every quasigroup satisfying $(N1)$ is a loop.
\end{theorem}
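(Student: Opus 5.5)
The plan is to assemble the structural pieces already isolated and reduce the theorem to the single equational fact that $j$ is constant. First, by Lemma~\ref{lem:jk} the map $j=k$ satisfies $x\cdot j(x)=x=j(x)\cdot x$ for all $x$. By Lemma~\ref{lem:idemval} and Proposition~\ref{prop:idempotent}, $j$ is idempotent, so $\mathrm{Fix}(j)=\mathrm{im}(j)$. Second, I would verify the hypotheses of Lemma~\ref{lem:collapse-abstract} with $X=G$, $e=j$ and $\alpha_a=L_a$. Transitivity is Lemma~\ref{lem:regular}, and coequalization is the claim $j\circ L_a=j$. Lemma~\ref{lem:collapse-abstract} then makes $j$ constant with some value $e$, and substituting $j(x)=e$ into (1) gives $x\cdot e=x=e\cdot x$ for all $x$. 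So $e$ is a two-sided identity and $G$ is a loop. Uniqueness of $e$ is automatic, since two two-sided identities $e,e'$ satisfy $e=ee'=e'$.

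The real content sits in the coequalization step, and there I would not rely on the earlier argument as written. Identity (3$'$), $(x\cdot u)\cdot u=x$ for $u=j(y)$, only says that $R_{j(y)}$ is an involution. It does not by itself yield $j(a\cdot x)=j(x)$, so this gap has to be filled honestly. An equivalent and more tractable target is to fix $y$, put $u:=j(y)$, and prove that $u$ is a global identity, i.e.\ $R_u=\mathrm{id}_G$ and $L_u=\mathrm{id}_G$. Constancy of $j$ then follows immediately, because $x\cdot u=x$ forces $j(x)=x\backslash x=u$ by uniqueness of left division.

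To obtain $R_u=\mathrm{id}_G$, I would feed $(N1)$ substitutions that involve $u$ together with the element $y$ it fixes. Taking $x:=y$ and the middle variable $:=u$ gives $((y u)z)u=y(u(zu))$, that is, $(yz)u=y(u(zu))$, valid for every $z$. Taking the first two variables equal to $u$ gives $(uz)u=u(u(zu))$. I would then combine these with the involution property (3$'$) and with cancellation of $L_y$, $L_u$, $R_u$ (all bijections). The aim is to derive $u(zu)=zu$, so that $L_u$ fixes every element of the form $zu$. Since $R_u$ is surjective, this gives $L_u=\mathrm{id}$. Then, from the substitution $x:=u$ in (3$'$), namely $(u\cdot u)\cdot u=u$, together with $L_u=\mathrm{id}$, I would deduce $x u=u(xu)=\dots=x$, which forces $R_u=\mathrm{id}$ as well.

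I expect the main obstacle to be exactly this equational core. One must show that a local idempotent identity $u=j(y)$ acts trivially on all of $G$, and the order of substitutions into $(N1)$ is delicate. If the direct attack above stalls, I would fall back on Kunen's own chain of identities (his Theorem 2.3 and its sequels). These are precisely the input the categorical reformulation packages as ``$j\circ L_a=j$''. Once that input is secured, the remaining steps are the routine applications of Lemma~\ref{lem:collapse-abstract} and (1) described in the first paragraph.
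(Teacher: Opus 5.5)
Your outer reduction is the paper's: Lemma~\ref{lem:collapse-abstract} applied to the left translations, then substitution into (1). Your diagnosis is also right. (3$'$) says only that $R_{j(y)}^2=\mathrm{id}$, so the paper's passage from (3$'$) to $j\circ L_a=j$ is unjustified, and the lemma asserting $j\circ L_a=j$ is given no proof. Moreover, by Lemma~\ref{lem:regular}, ``$j\circ L_a=j$ for all $a$'' is \emph{equivalent} to $j$ being constant, i.e.\ to the theorem itself, so the collapse lemma adds nothing and all the content sits in this step.

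Your proposal does not supply this step either, and the route you sketch cannot supply it as described. Your two substitutions read $R_uL_y=L_yL_uR_u$ and $R_uL_u=L_u^{2}R_u$. Combined with $R_u^2=\mathrm{id}$, the second gives only $R_uL_uR_u=L_u^{-1}$ and $L_u^3=\mathrm{id}$. In fact every ingredient you list holds in a quasigroup where $L_u\neq\mathrm{id}$: the two identities, (3$'$) for $u$, $uu=u$, $uy=yu=y$, $u=j(y)$, and cancellation. Take $x*z=Ax+Bz$ on $\mathbb{F}_2^3$ with $A(s,t,r)=(t,s,r)$, $B(s,t,r)=(t,s+t,r)$, $u=0$ and $y=(0,0,1)$; one checks $A^2=\mathrm{id}$ and $AB=B^2A$. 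There $L_u=B$ has order $3$, and $u*(z*u)\neq z*u$ for $z=(1,0,0)$. So more of $(N1)$ must be used, and ``falling back on Kunen's chain'' cites the result rather than proving it.

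The missing idea is to compare two \emph{different} local units; one $u$ together with its own fibre is not enough. Using Lemmas~\ref{lem:jk} and~\ref{lem:idemval}, here is one route that works.
\begin{itemize}
\item For idempotent $u$ put $\psi_u(w):=u(wu)$. Substituting $xu$ for $x$ and $u$ for $y$ in $(N1)$ and using $(xu)u=x$ gives $(\star)$: $(xz)u=(xu)\,\psi_u(z)$. Hence $j(xu)=\psi_u(j(x))$.
\item Since $R_uR_zR_u=R_{\psi_u(z)}$, the map $\psi_u$ is an involution, so $R_u$ maps $j^{-1}(w)$ into $j^{-1}(\psi_u(w))$.
\item If $j(a)=c$, then $(N1)$ at $(a,c,z)$ and at $(c,c,z)$ gives $L_a^{-1}R_cL_a=\psi_c=L_c^{-1}R_cL_c$. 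So $L_aL_c^{-1}$ commutes with $R_c$, whose fixed set is $j^{-1}(c)$. Since $L_c$ fixes $j^{-1}(c)$ pointwise, $a\backslash c\in j^{-1}(c)$.
\item Now take idempotents $u,v$, and put $a=vu$ and $c=ua=\psi_u(v)=j(a)$. Then $(\star)$ with $x=v$ and $z=cu\in j^{-1}(v)$ gives $c=a(uc)$, so $uc=a\backslash c\in j^{-1}(c)$.
\item Applying $L_uR_u=R_uL_u^{2}$ to $a$ gives $uv=(uc)u\in j^{-1}(\psi_u(c))=j^{-1}(v)$. Hence $uv=(uv)v=u=uu$, so $u=v$.
\end{itemize}
Since every value of $j$ is idempotent, $j$ is constant, which is the coequalization you need.
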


\begin{proof}
By the preceding argument,
$j$ is constant with value $e$.
Hence $e$ is a two-sided identity element.
Therefore $(G,\cdot)$ is a loop.
\end{proof}

\subsection*{Conceptual summary}

The Moufang identity $(N1)$ canonically produces an idempotent
endomorphism $j:G\to G$.
Its splitting extracts a fixed-point object $J$.
The quasigroup translations act transitively on $J$,
forcing the coequalizer of this action to be terminal.
Hence $J$ collapses to a singleton, and its unique element
is the global identity.
This realizes Kunen's theorem as a fixed-point extraction principle
in $\mathbf{Set}$.

\section{Discussion}

The present reformulation shows that Kunen's theorem is governed
by a structural fixed-point principle rather than by ad hoc
equational manipulation.
The key object is the idempotent endomorphism $j:G\to G$
canonically induced by the Moufang identity $(N1)$.
The existence of the identity element is then derived from
a categorical collapse of the fixed-point object,
driven by the transitive translation symmetry of the quasigroup.

In contrast to diagonal fixed point arguments,
the fixed point here arises from symmetry collapse:
an idempotent that coequalizes a transitive family of automorphisms
must factor through the terminal object.
This places Kunen's theorem within a broader categorical landscape
of fixed-point phenomena in which invariants emerge
from highly symmetric algebraic actions.

\begin{remark}[Comparison with Lawvere's fixed point theorem]
Lawvere's fixed point theorem asserts that in a cartesian closed category,
every endomorphism $f:X\to X$ admits a fixed point provided there exists
a weakly point-surjective map $e:A\to X^A$.
This theorem abstracts diagonal arguments underlying Cantor's theorem,
G\"odel's incompleteness, and self-reference phenomena.

The mechanism exhibited in the present paper is of a different nature.
Here the Moufang identity $(N1)$ canonically produces an idempotent
endomorphism $j:G\to G$.
Rather than invoking diagonalization or self-application,
we extract a fixed-point object via idempotent splitting,
and show that the quasigroup translation symmetry forces this object
to collapse to the terminal one.
Thus the identity element arises as a unique fixed point
enforced by symmetry collapse.

In this sense, Kunen's theorem exemplifies a structural fixed-point principle
driven by algebraic symmetry rather than by diagonal self-reference.
The present result therefore complements, rather than parallels,
Lawvere's theorem within the broader landscape of categorical
fixed-point phenomena.
\end{remark}

\begin{remark}[Extension beyond $\mathbf{Set}$]
The fixed-point collapse lemma relies only on two structural features:
the splitting of idempotents and the existence of a terminal
coequalizer for a transitive family of automorphisms.

Consequently, the same collapse mechanism applies in any
idempotent-complete (Cauchy complete) category
in which such coequalizers exist.

For example, Grothendieck toposes—such as categories of sheaves
on a site—are idempotent-complete and admit coequalizers.
In such settings, the present argument may be formulated internally,
yielding a fixed-point collapse phenomenon driven by symmetry.
This suggests potential connections between non-associative algebra
and geometric or sheaf-theoretic frameworks.
\end{remark}

\section{Conclusion}

We have shown that Kunen's theorem on Moufang quasigroups admits
a clear and conceptually transparent reformulation in terms of
a structural fixed-point principle in the category $\mathbf{Set}$.
The Moufang identity $(N1)$ canonically induces an idempotent
endomorphism whose splitting extracts a fixed-point object.
The action of quasigroup translations then forces a collapse of this object
to the terminal one, yielding a unique fixed point.
This fixed point coincides with the identity element of the resulting loop.

Beyond providing a conceptual clarification of Kunen's original argument,
the present reformulation highlights a categorical mechanism for the
emergence of fixed points via symmetry collapse.
This mechanism is distinct from diagonal fixed point constructions,
such as Lawvere's theorem, and suggests a complementary class of
fixed-point phenomena driven by algebraic symmetry rather than self-reference.

$$ $$
$$ $$

\noindent Takao Inou\'{e}

\noindent Faculty of Informatics

\noindent Yamato University

\noindent Katayama-cho 2-5-1, Suita, Osaka, 564-0082, Japan

\noindent inoue.takao@yamato-u.ac.jp
 
\noindent (Personal) takaoapple@gmail.com (I prefer my personal mail)

\end{document}